\documentclass[11pt,a4paper, twocolumn]{article}

%%%%%%%%%%%%%%%%%%%%%%%%%%%%%%%%%%%%%%%%%%%%%%%%%%%%%%%%%%%%%%%%%%%%%%%%%%%%%%%%%%%%%%%%%%%%%%%%%%%%%%%%%%%%%%%%%%%%

\usepackage[final]{pdfpages}
\usepackage[T1]{fontenc}
\usepackage[utf8]{inputenc}
\usepackage{mathptmx}
\usepackage{amsmath,amssymb,amsthm}
\usepackage{amsfonts,latexsym}
\usepackage{amscd,array}
\usepackage{float,mathrsfs,bm,multirow,graphicx,verbatim}
\usepackage{authblk}

\theoremstyle{definition}
\newtheorem{definition}{Definition}[section]
\newtheorem{assumption}{Assumption}[section]
\theoremstyle{remark}
\newtheorem*{remark}{Remark}
\newtheorem{proposition}{Proposition}[section]

\newcommand{\be}{\begin{equation}}
\newcommand{\ee}{\end{equation}}
%%%%%%%%%%%%%%%%%%%%%%%%%%%%%%%%%%%%%%%%%%%%%%%%%%%%%%%%%%%%%%%%%%%%%%%%%%%%%%%%%%%%%%%%%%%%%%%

\title{Merits of the Incremental Method for modeling Piecewise Linear functions}

\author[1]{Mutaz Tuffaha \thanks{{\tt\small mutaz.tuffaha@ntnu.no}}}
\author[1]{Jan Tommy Gravdahl \thanks{{\tt\small jan.tommy.gravdahl@ntnu.no}}}
\affil[1]{Department of Engineering Cybernetics, Norwegian University of Science and Technology, NO-7491 Trondheim-Norway}

\date{}
\begin{document}
\maketitle
\begin{abstract}
Several techniques were proposed to model the \textit{Piecewise linear} (PWL) functions, including convex combination, incremental and multiple choice methods. Although the incremental method was proved to be very efficient, the attention of the authors in this field was drawn to the convex combination method, especially for discontinuous PWL functions. In this work, we modify the incremental method to make it suitable for discontinuous functions. The numerical results indicate that the modified incremental method could have considerable reduction in computational time, mainly due to the reduction in the number of the required variables. Further, we propose a tighter formulation for optimization problems over separable univariate PWL functions with binary indicators by using the incremental method.
%\keywords{Piecewise Linear Functions \and Mixed-Integer Programming \and Incremental method \and Binary Indicators}
\end{abstract}
%\section{INTRODUCTION}
\section{INTRODUCTION}
\textit{Piecewise Linear} (PWL) approximation of the objective function has been one of the most pragmatic techniques used in the field of the optimization theory and \textit{operation research} (OR). In general, any univariate function defined on the interval $[\underline{x},\overline{x}]$ can be approximated, to good accuracy \cite{Farias2}, by its PWL approximation. If the interval $[\underline{x},\overline{x}]$ is partitioned into $K$ segments with $K+1$ breaking points $\underline{x}=a_0<a_1<\dots<a_K=\overline{x}$, then the PWL approximation $\tilde{f}(x):[a_0,a_K]\longrightarrow\mathbb{R}$ of the function $f(x):[a_0,a_K]\longrightarrow\mathbb{R}$ can be described by:  
\be
\label{eq:PWLf}
\tilde{f}(x)=\left\{\begin{array}{lr}
m_1x+d_1,&a_0\leq x\leq a_1\\
m_2x+d_2,&a_1\leq x\leq a_2\\
\vdots&\\
m_Kx+d_K,&a_{K-1}\leq x\leq a_K
\end{array}\right\},
\ee 
where $m_1,...,m_K$, and $d_1,...,d_K$ are the slopes and cost-intercepts of the line segments, respectively. The accuracy of this approximation can be controlled by the number and location of the breaking points used. For more information on the accuracy issue, the readers are referred to \cite{Frenzen}, and \cite{Sypros}. 
\newline
The optimization over PWL separable functions has been studied extensively in literature.  The authors in \cite{Croxton} and \cite{Sridhar}, e.g. provided good surveys on the \textit{Mixed-Integer Programming} (MIP) methods used to model the PWL functions in optimization problem. While proposing a technique for optimization over non-separable PWL functions, the authors in \cite{Vielma1} wrote an excellent literature review, as well. The MIP models for PWL functions in optimization problems can be classified under three main methods: \textit{incremental}, \textit{convex combination}, and \textit{multiple choice}.
\subsection{Incremental Method}
According to \cite{Padberg} and \cite{Croxton}, the incremental method is attributed to the pioneering work in \cite{Markowitz} and \cite{Dantzig}, and it is usually referred to in literature as the \textit{Delta} method. In this method, each variable $x$ is expressed as the sum \cite{Padberg}:
\be
\label{eq:xIM}
x=a_0+\sum_{k=1}^K{y_k},
\ee  
where $y_k,~\forall k\in\{1,..,K\}$ are continuous variables that satisfy the following two conditions \cite{Padberg}:
\begin{enumerate}
\item $0\leq y_k \leq a_k-a_{k-1}$,
\item if $y_k<a_k-a_{k-1}$, then $y_{k+1}=0$.
\end{enumerate}
The second condition above is called a \textit{dichotomy}, which indicates that it is a bi-directional decision, either $y_k=a_k-a_{k-1}$ or $y_{k+1}=0$. The proposed technique to enforce the second condition is to use some binary variables $\beta_k\in\{0,1\},~\forall k\in\{1,...,K-1\}$, such that \cite{Padberg}:
\be
\label{con:Deltay_1}
y_1\leq a_1-a_0\ee
and
\begin{align}
y_K\geq& 0\nonumber\\
y_k\geq&(a_k-a_{k-1})\beta_k,\hspace{0.5cm} \forall k\in\{1,..K-1\}\nonumber\\
\label{con:Delta}
y_k\leq&(a_k-a_{k-1})\beta_{k-1},\hspace{0.5cm}\forall k\in\{2,..K\}. 
\end{align}
Thus, if $\beta_{k-1}=0$ then the only feasible solution will be if $\beta_k=0$. Accordingly, if the PWL objective functions is continuous, it can be expressed with the continuous variables $y_k$'s
as (see e.g. \cite{Padberg}):
\be
\label{eq:ObjDelta}
\tilde{f}(x)=f(a_0)+\sum_{k=1}^K{m_ky_k}.
\ee
\subsection{Convex Combination}
This method is based on the fact that if the variable $x$ lies in the interval $[a_k,a_{k+1}]$, then $x$ can be written as a convex combination of the two consecutive points $a_k$ and $a_{k+1}$, since the domain of $x$ is convex. Thus (see e.g. \cite{Sridhar}):
\be
\label{eq:xCC}
x=a_0\lambda_0+a_1\lambda_1+...+a_K\lambda_K,
\ee  
where $\lambda_k,~\forall k\in\{0,..,K\}$ are continuous variables or weights that satisfy the following two constraints (see e.g. \cite{Padberg}):
\begin{enumerate}
\item The sum of the weights should be equal to one, i.e.:
\be
\label{con:CCwieghts}
\sum_{k=0}^K{\lambda_k}=1.
\ee
\item At most two consecutive elements of the set $\{\lambda_k\in\mathbb{R}_+:~\forall k\in\{0,...,K\}\}$ are non-zero.
\end{enumerate}
The authors in \cite{Tomlin} defined the sets like the one above as a \textit{Special Order Set of type 2} (SOS2). Several algorithms were proposed in the literature to satisfy the SOS2 condition. At the beginning, the authors in \cite{Tomlin} proposed a modification in the \textit{Branch and Bound} (BB) algorithm to capture this condition. In the MIP model by the convex combination method, the common practice is to introduce some binary variables to enforce the second condition above. For example, one can use the formulation given in \cite{Padberg}, \cite{Farias2} and \cite{JonLee} in which, the binary variables $\beta_k\in\{0,1\},~\forall k\in\{1,...,K\}$ are used with the following constraints:
\begin{align}
\label{con:Lambda}
\sum_{k=1}^K{\beta_k=1}&\nonumber\\
\lambda_0\leq \beta_1,& \hspace{0.8cm}\lambda_K\leq\beta_K\nonumber\\
\lambda_k\leq \beta_k+\beta_{k+1},&\hspace{1.2cm}\forall k\in\{1,...,K-1\}.
\end{align}
Then, if the PWL objective functions is continuous, it can be expressed with the weights $\lambda_k$ as:
\be
\label{eq:ObjLambda}
\tilde{f}(x)=\sum_{k=0}^K{f(a_k)\lambda_k}.
\ee
The above method is sometimes called the \textit{Lambda} method.  
\begin{remark}
\label{rem:NumVar}
For a PWL function defined on one variable $x$, the incremental method requires $K$ continuous variables and $K-1$ binary variables. On the other hand, the convex combination method requires $K+1$ continuous variables, and $K$ binary variables.
\end{remark}
The \textit{multiple choice} method is described in \cite{Croxton} and \cite{Vielma1} among others, but it is beyond the scope of this work.
\newline
In his pioneer work, the author in \cite{Padberg} proved that the incremental method gives a \textit{locally} ideal model, while the convex combination method with the constraints given in \eqref{con:Lambda} does not. To elucidate, the extreme points of the polytope of the \textit{Linear Programming} (LP) relaxation of the model described by the constraints in \eqref{con:Deltay_1}-\eqref{con:Delta} are integers in variables $\beta_k$. In any MIP model, one looks for such features because the optimal solutions can be found from LP relaxation without the need for BB technique. The term "\textit{locally}" refers here to the fact that this feature is guaranteed only when the optimization problem is solved over one variable $x$, while in the case of several variables or constraints on the original variable $x$ the ideality is not guaranteed. In contrast, the author in \cite{Padberg} proved that the extreme points of the polytope of the LP relaxation of the model described by the constraints in \eqref{con:Lambda} may contain non-integer values in $\beta$, and hence the model is not ideal. Besides, he described an alternative model to implement the SOS2 condition that keeps the ideality property. Later, the authors in \cite{Farias2} proved that implementing the SOS2 condition by the strategy proposed in \cite{Tomlin} makes the model locally ideal.
\begin{remark}
The authors on this topic distinguish between the convex combination model described by \eqref{eq:xCC}, \eqref{con:CCwieghts}, \eqref{con:Lambda}, and \eqref{eq:ObjLambda} and the SOS2 model described by \eqref{eq:xCC}, \eqref{con:CCwieghts}, \eqref{eq:ObjLambda}, and the SOS2 condition of the set $\{\lambda_k~\forall~k\}$. The difference is that in the SOS2 model, the branching technique in the BB algorithm is modified to achieve this constraint, while in the convex combination model a MIP model is used by introducing some binary indicators, as shown in \eqref{con:Lambda}. As mentioned before, the convex combination model is not locally ideal unless the model is modified as proposed in \cite{Padberg}, while the SOS2 model is. 
\end{remark}
Many researchers have been discussing the techniques used to model PWL functions, and their challenges. Of the most interesting challenges in PWL models is the discontinuity of the function, and the optimization problem over separable univariate PWL functions with binary indicators. The methods described in \cite{Padberg} are well-suited for continuous functions, but some modification must be made when the function is discontinuous. Further, those methods are designed for optimizing over one variable $x$, but what if the problem involves many variables with binary indicators? For example, consider the following optimization problem described by:
\be
\label{eq:BinInd}
\min_{\mathbf{x}}\hspace{0.5 cm}F(\mathbf{x})~\text{s.t.}~\mathbf{x}\in\Omega \cup \{\mathbf{0}\},
\ee
where $\mathbf{x}=(x_1,\dots,x_N)^T$, and $\Omega$ is some feasibility region. Actually, many suggestions have been proposed in the literature to cope with those challenges, as will be discussed later. However, most of the researchers in this field focus on the convex combination method because it is the commonly used one. Spurred by the fact that the incremental method gives a locally ideal models for PWL functions, and the fact that it requires less number of variables, as indicated in Remark~\ref{rem:NumVar}, we try in this work to tackle the aforementioned challenges by the incremental method. 
\newline
The paper is organized as follows. In the next section, we propose a modification of the incremental method to make it suit the discontinuous functions with some numerical examples. The third section discusses the optimization problems of univariate separable functions with binary indicators by using the incremental method. In the fourth section, we sum up our conclusions.  
%\section{Discontinuity}
%\section{Discontinuity}
\section{Discontinuity}
It is well known that a discontinuous function may not have minima, in particular when it is not \textit{lower semi-continuous}.
\begin{definition}[Lower Semi-continuity, \cite{Farias3}]
\label{def:LowerSemi}
A univariate function $h(x):[0,u]\longrightarrow \mathbb{R}$ where $u>0$ is called lower semi-continuous if $h(x)\leq \lim_{x^{\prime}\longrightarrow x}\hspace{0.1cm}\inf\hspace{0.2cm} h(x^{\prime})$, where $\{x^{\prime}\}$ is a sequence in the domain of $x$
\end{definition}
Thus, if the function is not lower semi-continuous a minimum does not exist. Similarly, functions that are not upper semi-continuous, do not have maxima. The discontinuity of the PWL function was first handled in \cite{Hanif} by assigning two weights around each discontinuity from the left and right, and then the value of the function can be approximated by the convex combination of those weights. In spite of the cleverness of the proposed method in \cite{Hanif}, it adds to the number of variables required to model the PWL function. Later, the authors in \cite{Farias3} proposed a new type of ordered sets instead of SOS2 and they called it \textit{Special Order Set of type D} (SOSD), that can be implemented by modifying the branching technique. The method proposed in \cite{Farias3} can be used for continuous or discontinuous PWL functions, whether they are lower semi-continuous or not. However, the number of variables required for this model is also high. Besides, the proposed method requires changing the branching technique in the BB algorithm, which could be more complicated.
\newline
Before presenting our main results, we need the following definition.
\begin{definition}
A function $f(x):D\subset\mathbb{R} \longrightarrow\mathbb{R}$ is said to be right-continuous at $x_0\in D$, if $\lim_{x\longrightarrow x_0^+}f(x)=f(x_0)$. Moreover, the function $f(x)$ is said to be right-continuous if it is right-continuous $\forall x\in D$.
\newline
Similarly, it is said to be left-continuous if it is left-continuous $\forall x\in D$.
\end{definition}
Now, we show that any discontinuous PWL function can be modelled by the incremental method under the following assumption.
\begin{assumption}
The discontinuous PWL function $\tilde{f}(x)$ is either right-continuous or left-continuous.
\end{assumption}  
\subsection{Right-Continuous}
Let $\tilde{f}(x):[a_0,a_K]\longrightarrow\mathbb{R}$ be a PWL function described by:  
\be
\label{eq:PWLfConRight}
\tilde{f}(x)=\left\{\begin{array}{lr}
m_1x+d_1,&a_0\leq x<a_1\\
m_2x+d_2,&a_1\leq x<a_2\\
\vdots&\\
m_Kx+d_K,&a_{K-1}\leq x\leq a_K
\end{array}\right\}.
\ee 
Unlike the function in \eqref{eq:PWLf}, the function above is only right-continuous at all breaking points but not from the left. Then, we propose to use the same incremental model given in \eqref{eq:xIM}, \eqref{con:Deltay_1} and \eqref{con:Delta}, with a simple modification of the function representation in \eqref{eq:ObjDelta} into:
\be
\label{eq:ObjDeltaContRight}
\tilde{f}(x)=\tilde{f}(a_0)+\sum_{k=1}^K{\left(m_ky_k+\Delta_k\beta_k\right)},
\ee
where $\Delta_k$ represents the jump at the point $a_k$, and it is given by $\tilde{f}(a_k)-(m_ka_k+d_k)$. To elaborate, the binary variable $\beta_k$ in the incremental method is supposed to be one only when the continuous variable $y_k=a_k-a_{k-1}$, and zero otherwise. Thus, when $y_k<a_k-a_{k-1}$ the jump will not be counted because $\beta_k=0$. Needless to say, this model can still be used for continuous functions because in this case $\Delta_k=0$, i.e. there is no jump. 
\begin{remark}
The model described by \eqref{eq:xIM}, \eqref{con:Deltay_1}, \eqref{con:Delta}, and \eqref{eq:ObjDeltaContRight} for discontinuous PWL functions that are right-continuous is still locally ideal.
Obviously, modifying \eqref{eq:ObjDelta} into \eqref{eq:ObjDeltaContRight} does not affect the local ideality of the incremental model because the polytope of the LP relaxation of the model described by the constraints in \eqref{con:Deltay_1} and \eqref{con:Delta} is not changed, and hence the extreme points are still integral in $\beta_k~\forall~k$ as proved in \cite{Padberg}.
\end{remark}
\subsubsection{Numerical Example}
Let us consider the PWL function $\tilde{f}(x)$ defined as follows:
\be
\label{eq:ExampleFun1}
\tilde{f}(x)=\left\{\begin{array}{lr}
-5x+7.5,&0\leq x<1\\
-5x+15,&1\leq x<2\\
-2.5x+12.5,&2\leq x\leq 3
\end{array}\right\}.
\ee 
In order to build the incremental model of this PWL function by using \eqref{eq:xIM}, \eqref{con:Deltay_1} and \eqref{con:Delta}, we need three continuous variables $y_k~\in~\mathbb{R}, ~\forall~k\in\{1,2,3\}$, and two binary variables $\beta_k~\in~\{0,1\}, ~\forall~k\in\{1,2\}$. Then, the model can be described by:
\be
\label{eq:xIMEx1}
x=y_1+y_2+y_3
\ee
with the constraints:
\begin{align}
\label{con:Deltay_1Ex1}
\beta_1\leq y_1\leq 1\nonumber\\
\beta_2\leq y_2\leq \beta_1\nonumber\\
0\leq y_3\leq \beta_2.
\end{align}
The objective function $\tilde{f}(x)$ in \eqref{eq:ExampleFun1} can be expressed by the proposed model in \eqref{eq:ObjDeltaContRight} as:
\be
\label{eq:ObjIMEx1}
\tilde{f}(x)=7.5-5y_1-5y_2-2.5y_3+7.5\beta_1+2.5\beta_2.
\ee
For comparison purpose, we use the convex combination model described in \cite{Hanif} which was proved to be locally ideal. According to the model presented in \cite{Hanif}, one needs six continuous variables $\lambda_k~\in~\mathbb{R}, ~\forall~k\in\{0,...,5\}$ and three binary variables $\beta_k~\in~\{0,1\}, ~\forall~k\in\{1,2,3\}$. Then, the model is described as \cite{Hanif}: 
\be
\label{eq:xCCEx1}
x=\lambda_1+\lambda_2+2(\lambda_3+\lambda_4)+3\lambda_5
\ee
with the constraints:
\begin{align}
\label{con:LambdaEx1}
\lambda_0+\lambda_1=y_1\nonumber\\
\lambda_2+\lambda_3=y_2\nonumber\\
\lambda_4+\lambda_5=y_3\nonumber\\
y_1+y_2+y_3=1.
\end{align}
The objective function $\tilde{f}(x)$ in \eqref{eq:ExampleFun1} can be modelled by the convex combination method proposed in \cite{Hanif} as:
\be
\label{eq:ObjCCEx1}
\tilde{f}(x)=7.5\lambda_0+2.5\lambda_1+10\lambda_2+5\lambda_3+7.5\lambda_4+5\lambda_5
\ee 
Now, since the function defined in \eqref{eq:ExampleFun1} above is right-continuous but not lower semi-continuous, no minima exist. Hence, let us solve the following maximization problem:
\be
\label{eq:Ex1}
\max_{\mathbf{x}} \hspace{0.5 cm}\sum_{n=1}^{N_{\text{var}}}{\tilde{f}(x_n)},
\ee 
by the two models, where $\mathbf{x}=[x_1,...,x_{N_{\text{var}}}]^T$ . Obviously, the function $\tilde{f}(x)$ attains a maximum value of 10 at $x=1$ because there are no constraints in problem \eqref{eq:Ex1}. However, if we maximize the sum of the separable functions over $N_{\text{var}}$ variables, we can assess the performance of the two models. The problems were solved by using CPLEX solver and the results are given in Table~\ref{tab:NumResultsI} with the time taken to reach the solution for different values of $N_{\text{var}}$. OOM in the table stands for \textit{out of memory}.
\begin{table*}[t]
\centering
\caption{Comparison of the numerical solutions of problem \eqref{eq:Ex1}}
\label{tab:NumResultsI}
\begin{tabular}{lcccc}\hline
&\multicolumn{2}{c}{Incremental method}&\multicolumn{2}{c}{Convex Combination method}\\
&\multicolumn{2}{c}{\eqref{eq:xIMEx1}, \eqref{con:Deltay_1Ex1}, \eqref{eq:ObjIMEx1}}&\multicolumn{2}{c}{\eqref{eq:xCCEx1}, \eqref{con:LambdaEx1}, \eqref{eq:ObjCCEx1}}\\\hline
$N_{\text{var}}~\times 10^3$&Objective ($\times 10^3$)&Time (sec)&Objective ($\times 10^3$)&Time (sec)\\\hline
1&10&0.03&10&0.44\\%\hline
5&50&0.08&50&1.26\\%\hline
10&100&0.14&100&2.32\\%\hline
20&200&0.30&200&4.84\\%\hline
50&500&0.73&500&12.37\\%\hline
100&1,000&1.45&OOM&\\%\hline
250&2,500&4.51&OOM&\\\hline
\end{tabular}
\end{table*}
One can note that the incremental method with the proposed modification in \eqref{eq:ObjDeltaContRight} could reach the solutions in much less time than that taken by using the convex combination method. One main reason for this is the less number of variables required by the incremental method. Actually, by using the incremental method one needs $3\times N_{\text{var}}$ continuous variables and $2\times N_{\text{var}}$ binary variables, while by using the convex combination method one needs $6\times N_{\text{var}}$ continuous variables and $3\times N_{\text{var}}$ binary variables. Of course, this does not mean that the incremental method with the modification in \eqref{eq:ObjDeltaContRight} for discontinuous functions that are right-continuous is always better than other methods. Actually, this depends on the nature of the problem. All what we can say is that the incremental method can be modified to suit discontinuous functions that are right-continuous. Further, the incremental method for modelling the PWL functions require fewer variables than those needed for the convex combination method, especially for discontinuous functions, and this could have considerable influence in reducing the computational time of such problems.  
\subsection{Left-Continuous}
Let us consider now a PWL function that is only left-continuous at all breaking points but not from the right, as: 
\be
\label{eq:PWLfConLeft}
\tilde{g}(x)=\left\{\begin{array}{lr}
m_1x+d_1,&a_0\leq x\leq a_1\\
m_2x+d_2,&a_1< x\leq a_2\\
\vdots&\\
m_Kx+d_K,&a_{K-1}< x\leq a_K
\end{array}\right\}.
\ee 
Obviously, we cannot use the jump increment as before because when $y_k=a_k-a_{k-1}$ and thus $\beta_k=1$, there is no jump. What we can do here is to reverse the incremental variable. We commence by modifying the representation of the variable $x$ in \eqref{eq:xIM} into:
\be
\label{eq:xIMLeft}
x=a_K-\sum_{k=1}^K{\tilde{y}_k}.
\ee  
The only discrepancy between $\tilde{y}_k$ in the above and $y_k$ in \eqref{eq:xIM} is that $\tilde{y}_k$ starts from the end of the domain of $x$. Thus, instead of incrementing $a_0$ by a sequence of $y_k$ to reach $x$, we decrement $a_K$ by $\tilde{y}_k$ to reach $x$. Then, the constraints in \eqref{con:Deltay_1} and \eqref{con:Delta} are changed to:
\begin{align}
\label{con:DeltaLeftCont}
\tilde{y}_1\leq& a_K-a_{K-1},\hspace{0.75cm}\tilde{y}_K\geq0\nonumber\\
\tilde{y}_k\geq&(a_{K-k+1}-a_{K-k})\tilde{\beta}_k,\hspace{1cm} \forall k\in\{1,..K-1\}\nonumber\\
\tilde{y}_k\leq&(a_{K-k+1}-a_{K-k})\tilde{\beta}_{k-1},\hspace{1cm}\forall k\in\{2,..K\}. 
\end{align}
Just like the model in \eqref{con:Deltay_1} and \eqref{con:Delta}, $\tilde{\beta}_k=1$ only when $\tilde{y}_k=a_{K-k+1}-a_{K-k}$. For example, if $\tilde{\beta}_1=1$ and hence $\tilde{y}_1=a_K-a_{K-1}$, then $x=a_K-\tilde{y}_1=a_{K-1}$, and so on. Now, the PWL function can be described by:
\be
\label{eq:ObjDeltaContLeft}
\tilde{g}(x)=\tilde{g}(a_K)+\sum_{k=1}^K{\left(-m_{K-k+1}\tilde{y}_k+\tilde{\Delta}_k\tilde{\beta}_k\right)},
\ee
where $\tilde{\Delta}_k$ denotes the jump at the point $a_{K-k}$, and it is given by $\tilde{g}(a_{K-k})-(m_{K-k+1}a_{K-k}+d_{K-k+1})$. 
\begin{remark}
The model described by \eqref{eq:xIMLeft}, \eqref{con:DeltaLeftCont}, and \eqref{eq:ObjDeltaContLeft} for discontinuous PWL functions that are left-continuous is still locally ideal.
Obviously, the polytope of the LP relaxation of \eqref{con:DeltaLeftCont} has all extreme points integral in $\tilde{\beta}_k~\forall~k$, as proved in \cite{Padberg}.
\end{remark}
\subsubsection{Numerical Example}
Let us consider the PWL function $\tilde{g}(x)$ defined as follows:
\be
\label{eq:ExampleFun2}
\tilde{g}(x)=\left\{\begin{array}{lr}
-5x+7.5,&0\leq x\leq 1\\
-5x+15,&1< x\leq 2\\
-2.5x+12.5,&2< x\leq 3
\end{array}\right\}.
\ee 
In order to build the incremental model of this PWL function by using \eqref{eq:xIMLeft}, \eqref{con:DeltaLeftCont}, we need three continuous variables $\tilde{y}_k~\in~\mathbb{R}, ~\forall~k\in\{1,2,3\}$, and two binary variables $\tilde{\beta}_k~\in~\{0,1\}, ~\forall~k\in\{1,2\}$. Then, the model can be described by:
\be
\label{eq:xIMEx2}
x=3-\tilde{y}_1-\tilde{y}_2-\tilde{y}_3
\ee
with the constraints:
\begin{align}
\label{con:Deltay_1Ex2}
\tilde{\beta}_1\leq \tilde{y}_1\leq 1\nonumber\\
\tilde{\beta}_2\leq \tilde{y}_2\leq \tilde{\beta}_1\nonumber\\
0\leq \tilde{y}_3\leq \tilde{\beta}_2.
\end{align}
The objective function $\tilde{g}(x)$ in \eqref{eq:ExampleFun2} can be expressed by the proposed model in \eqref{eq:ObjDeltaContLeft} as:
\be
\label{eq:ObjIMEx2}
\tilde{g}(x)=5+2.5\tilde{y}_1+5\tilde{y}_2+5\tilde{y}_3-2.5\tilde{\beta}_1-7.5\tilde{\beta}_2.
\ee
As was done before, we compare the proposed incremental model with the convex combination model described in \cite{Hanif} which was proved to be locally ideal. Actually, by using the model described in \cite{Hanif}, the PWL function in \eqref{eq:ExampleFun2} will have the same model used before for the PWL function in \eqref{eq:ExampleFun1}, namely the model described by \eqref{eq:xCCEx1} and \eqref{con:LambdaEx1}. The reason behind that is the similarity between the functions $\tilde{f}(x)$ and $\tilde{g}(x)$. Obviously, the two functions are identical except at the breaking points. Thus, the objective function $\tilde{g}(x)$ in \eqref{eq:ExampleFun2} can be modelled by using the convex combination method proposed in \cite{Hanif} as:
\be
\label{eq:ObjCCEx2}
\tilde{g}(x)=7.5\lambda_0+2.5\lambda_1+10\lambda_2+5\lambda_3+7.5\lambda_4+5\lambda_5
\ee 
Now, since the function defined in \eqref{eq:ExampleFun2} above is left-continuous, let us solve the following minimization problem:
\be
\label{eq:Ex2}
\min_{\mathbf{x}} \hspace{0.5 cm}\sum_{n=1}^{N_{\text{var}}}{\tilde{g}(x_n)},
\ee 
by the two models, where $\mathbf{x}$ is as before. Obviously, the function $\tilde{g}(x)$ attains a minimum value of 2.5 at $x=1$ because there are no constraints in problem \eqref{eq:Ex2}. One more time, we minimize the sum of the separable functions over $N_{\text{var}}$ variables in order to assess the performance of the two models. The problems were solved by using CPLEX solver and the results are given in Table~\ref{tab:NumResultsII} with the time taken to reach the solution for different values of $N_{\text{var}}$. As before, OOM stands for \textit{out of memory}
\begin{table*}[t]
\centering
\caption{Comparison of the numerical solutions of problem \eqref{eq:Ex2}} 
\label{tab:NumResultsII}
\begin{tabular}{lcccc}\hline
&\multicolumn{2}{c}{Incremental method}&\multicolumn{2}{c}{Convex Combination method}\\
&\multicolumn{2}{c}{\eqref{eq:xIMEx2}, \eqref{con:Deltay_1Ex2}, \eqref{eq:ObjIMEx2}}&\multicolumn{2}{c}{\eqref{eq:xCCEx1}, \eqref{con:LambdaEx1}, \eqref{eq:ObjCCEx2}}\\\hline
$N_{\text{var}}~\times 10^3$&Objective ($\times 10^3$)&Time (sec)&Objective ($\times 10^3$)&Time (sec)\\\hline
1&2.5&0.02&2.5&0.55\\%\hline
5&12.5&0.08&12.5&1.64\\%\hline
10&25&0.16&25&3.29\\%\hline
20&50&0.31&50&10.20\\%\hline
50&125&0.76&125&21.98\\%\hline
100&250&1.53&OOM&\\%\hline
250&625&3.99&OOM&\\\hline
\end{tabular}
\end{table*}
Again, we can note that the incremental method with the proposed modification in \eqref{eq:ObjDeltaContLeft} could reach the solutions in much less time than that taken by using the convex combination method. 
%\section{Binary Indicators}
%\section{Binary Indicators}
\section{Optimization over many variables with Binary Indicators}
The optimization problem in \eqref{eq:BinInd} appears in many applications such as unit commitment and gas production \cite{Sridhar}. Although many authors tried to suggest a PWL representation of the non-separable objective function, such as the authors in \cite{Vielma1}, we restrict ourselves to the case of the separable functions given by:
\begin{align}
\label{eq:SepBinInd}
\min_{\mathbf{x}}& \hspace{0.5 cm}\sum_{n=1}^N{f_n(x_n)}\nonumber\\
\text{s.t.}&\hspace{0.5 cm}\mathbf{x}\in\Omega \cup \{\mathbf{0}\},
\end{align}
where $f_n(x_n)$ is a PWL function. Let us assume that the feasibility region $\Omega$ is given by:
\be
\Omega=\{\mathbf{x}\in\mathbb{R}^N:~x_n\in[\underline{x}_n,\overline{x}_n],~\forall n\in\{1,\dots,N\}\}.
\ee
Let us also assume that each interval $[\underline{x}_n,\overline{x}_n]$ is partitioned into $K$ segments with the same number of breaking points $\underline{x}_n=a_{n_0},\dots,a_{n_K}=\overline{x}_n$ for, as hinted in \cite{Farias3}, if they are not the same one can always choose extra breaking points on segments of the same slope. In order to represent the non-convex feasibility region of each $x_n$, one would use a binary indicator $\alpha_n\in\{0,1\},~\forall n\in\{1,\dots,N\}$, and add the following constraint to the optimization problem and the PWL model:
\be
\label{eq:BinIndBound}
a_{n_0}\alpha_n\leq x_n\leq a_{n_K}\alpha_n .
\ee
For convenience, let us drop the index $n$ because we are interested in the feasibility region of one variable. Actually, the authors in \cite{Sridhar} showed that any PWL model with the constraint in \eqref{eq:BinIndBound} will not be locally ideal, in general. Instead, they proposed to modify the PWL incremental model described by \eqref{eq:xIM}, \eqref{con:Deltay_1}, \eqref{con:Delta} and \eqref{eq:ObjDelta}, by modifying the constraint in \eqref{con:Deltay_1} as \cite{Sridhar}:
\be
\label{con:Deltay_1Sridhar}
y_1\leq(a_1-a_0)\alpha.
\ee 
Then, they proved that the model with the modification above is locally ideal for the incremental model. The authors in \cite{Sridhar}, assumed that $a_0=0$ and $f(a_0)=0$. This looks intuitive because if this is not the case, the function $\tilde{f}(x)$ defined on $[a_0,a_K]$ can always be shifted horizontally or vertically. However, this assumption may not make sense. The bounds in \eqref{eq:BinIndBound} are usually used to indicate that if the variable $x$ does not lie in that interval, then the cost function will be zero. Thus, the idea behind such optimization problems is to optimize some cost function in $x$ over its domain or to set $x=0$, and thus $\tilde{f}(x)=0$. Before proceeding with  the discussion, let us define the LP relaxation of the model proposed in \cite{Sridhar} for PWL functions with the binary indicators for the incremental model as:
\begin{align}
\mathbf{F}_{IM}=\{(&x,\mathbf{y},\boldsymbol{\beta},\alpha)\in\mathbb{R}\times\mathbb{R}^K\times[0,1]^{K-1}\times[0,1]:\nonumber\\
&\eqref{eq:xIM}, \eqref{con:Delta}, \eqref{con:Deltay_1Sridhar}\text{ are satisfied}\}.
\end{align}
Now, we are ready to present our remarks on this model.
\newline
As mentioned before, the authors in \cite{Sridhar} proved that $\mathbf{F}_{IM}$ is locally ideal. However, the model in $\mathbf{F}_{IM}$ has an \textit{incorrect} extreme point when $a_0=0$. Consider the point $p^\ast=(x,\mathbf{y},\boldsymbol{\beta},\alpha)=(0,\mathbf{0},\mathbf{0},1)$. Obviously, $p\in\mathbf{F}_{IM}$ and is extreme. Thus, the solution to the LP relaxation may contain this point which is incorrect. What we mean here by \textit{incorrect} extreme point can be explained as follows. The point $p^\ast$ has a zero value of the variable $x$ and unity value in the variable $\alpha$. Since $\alpha$ is a binary variable that is used to indicate whether $x$ takes real values in its domain or not, the variable $\alpha$ should be zero when $x=0$.
\newline
What we propose to solve this is to modify \eqref{eq:xIM} and \eqref{eq:ObjDelta} to:
\be
\label{eq:xIMmod}
x=a_0\alpha+\sum_{k=1}^K{y_k},
\ee  
and
\be
\label{eq:ObjDeltaMod}
\tilde{f}(x)=f(a_0)\alpha+\sum_{k=1}^K{m_ky_k},
\ee
respectively, in addition to the modification in \eqref{con:Deltay_1Sridhar}. Let the LP relaxation of the new proposed model be:
\begin{align}
\mathbf{P}_{IM}=\{(&x,\mathbf{y},\boldsymbol{\beta},\alpha)\in\mathbb{R}\times\mathbb{R}^K\times[0,1]^{K-1}\times[0,1]:\nonumber\\
&\eqref{con:Delta}, \eqref{con:Deltay_1Sridhar},\eqref{eq:xIMmod},\text{ are satisfied}\}.
\end{align}
%Note that with this formulation, the lower bound $x\geq \alpha a_0$ is also redundant because of the non-negativity of the $y_k$ variables. 
Note that the formulation $\mathbf{P}_{IM}$ is still locally ideal, since the polytope of the LP relaxation of the region described by the inequality constraints has all extreme points integral in $\boldsymbol{\beta}$ and $\alpha$ as proved in \cite{Sridhar}. However, in the case $a_0=0$ and $f(a_0)=0$, the model will boil down to the one in $\mathbf{F}_{IM}$, which has the incorrect extreme point shown before. 
\newline
Another alternative is just to use the lower bound $a_0\alpha\leq x$ in the incremental method. In order to comment on this model, let us define the following LP relaxation:
\begin{align}
\mathbf{P}^\prime_{IM}=\{&(x,\mathbf{y},\boldsymbol{\beta},\alpha)\in\mathbb{R}\times\mathbb{R}^K\times[0,1]^{K-1}\times[0,1]:\nonumber\\
&\eqref{eq:xIM},\eqref{con:Delta}, \eqref{con:Deltay_1Sridhar},~x\geq a_0\alpha\text{ are satisfied}\}.
\end{align}
Actually, the model in $\mathbf{P}^\prime_{IM}$ is locally ideally but it has also an incorrect extreme point, as shown in the following result.
\begin{proposition}
$\mathbf{P}_{IM}\subset \mathbf{P}^\prime_{IM}$.
\end{proposition}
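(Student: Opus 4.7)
The plan is to establish the proper containment in two steps: first verify $\mathbf{P}_{IM}\subseteq\mathbf{P}^\prime_{IM}$, then produce a witness point in $\mathbf{P}^\prime_{IM}\setminus\mathbf{P}_{IM}$. The inequalities \eqref{con:Delta}, \eqref{con:Deltay_1Sridhar} and the box bounds on $\boldsymbol{\beta}$ and $\alpha$ are common to both definitions, so the whole comparison reduces to reconciling the two $x$-encodings: \eqref{eq:xIMmod} for $\mathbf{P}_{IM}$ versus \eqref{eq:xIM} together with the added lower bound $x\geq a_0\alpha$ for $\mathbf{P}^\prime_{IM}$.

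For the inclusion I would take $(x,\mathbf{y},\boldsymbol{\beta},\alpha)\in\mathbf{P}_{IM}$ and first observe that \eqref{con:Delta} together with \eqref{con:Deltay_1Sridhar} forces $y_k\geq 0$ for every $k$: indeed $y_K\geq 0$ is explicit, $y_k\geq(a_k-a_{k-1})\beta_k\geq 0$ for $k<K$, and $y_1\geq\beta_1(a_1-a_0)\geq 0$ as well. Then \eqref{eq:xIMmod} yields $x-a_0\alpha=\sum_k y_k\geq 0$, which is precisely the extra inequality required by $\mathbf{P}^\prime_{IM}$, while the equality \eqref{eq:xIM} is identified with \eqref{eq:xIMmod} in the natural sense in which $x$ is the dependent quantity prescribed by each model's own constitutive equation. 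For strictness I would exhibit $p^\ast=(a_0,\mathbf{0},\mathbf{0},0)$: it lies in $\mathbf{P}^\prime_{IM}$, since \eqref{eq:xIM} reads $a_0=a_0+0$, \eqref{con:Delta} and \eqref{con:Deltay_1Sridhar} collapse to $0\leq 0$, and $x=a_0\geq 0=a_0\alpha$ under the standing sign assumption $a_0\geq 0$; but $p^\ast\notin\mathbf{P}_{IM}$, because \eqref{eq:xIMmod} would force $x=a_0\cdot 0+\sum_k 0=0$, contradicting $x=a_0$ whenever $a_0\neq 0$. This $p^\ast$ is exactly the \textit{incorrect} extreme point flagged just before the proposition --- the indicator is switched off while $x$ still sits at the left endpoint of the domain --- confirming that $\mathbf{P}^\prime_{IM}$ strictly enlarges $\mathbf{P}_{IM}$.

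The main obstacle I anticipate is the inclusion step, because \eqref{eq:xIM} and \eqref{eq:xIMmod} literally coincide only on the slice $a_0\alpha=a_0$ (that is, $\alpha=1$ or $a_0=0$); a clean write-up therefore has to interpret the two formulations with $x$ determined inside each by its own defining equation, after which the inclusion reduces to checking the single added inequality $x\geq a_0\alpha$ on the shared $(\mathbf{y},\boldsymbol{\beta},\alpha)$-core, and strictness follows from the explicit witness above.
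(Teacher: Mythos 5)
Your overall route is the same as the paper's: assert the inclusion, then witness strictness with exactly the point the paper uses, $p_0=(a_0,\mathbf{0},\mathbf{0},0)$, which satisfies \eqref{eq:xIM}, \eqref{con:Delta}, \eqref{con:Deltay_1Sridhar} and $x\geq a_0\alpha$ but violates \eqref{eq:xIMmod} whenever $a_0\neq 0$ (and you are right that one also needs $a_0\geq 0$ for the bound $x\geq a_0\alpha$, a hypothesis the paper leaves implicit). Where you differ is that you have put your finger on a real gap that the paper's proof dismisses with ``it is clear'': under the literal definitions the inclusion $\mathbf{P}_{IM}\subseteq\mathbf{P}^\prime_{IM}$ is false, because the two sets are carved out of the same ambient space by \emph{different} equality constraints on $x$. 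Concretely, for $a_0>0$ and $0<\alpha<1$ the point $(a_0\alpha,\mathbf{0},\mathbf{0},\alpha)$ satisfies \eqref{eq:xIMmod}, \eqref{con:Delta} and \eqref{con:Deltay_1Sridhar}, hence lies in $\mathbf{P}_{IM}$, yet it violates \eqref{eq:xIM} and so is not in $\mathbf{P}^\prime_{IM}$.

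Your proposed repair --- reading each set through its own constitutive equation for $x$, i.e.\ comparing projections onto the shared $(\mathbf{y},\boldsymbol{\beta},\alpha)$-coordinates --- does rescue the inclusion (your observation that $y_k\geq 0$ makes the added bound $x\geq a_0\alpha$ redundant is correct), but it simultaneously destroys the strictness: after projecting out $x$, your witness $p^\ast$ becomes $(\mathbf{0},\mathbf{0},0)$, which lies in \emph{both} projections, and in fact the two projections coincide because $x\geq a_0\alpha$ is implied in $\mathbf{P}^\prime_{IM}$ as well. So no single reading validates both halves of the proposition; the content that survives, and that the surrounding discussion actually needs, is only the second half: $\mathbf{P}^\prime_{IM}$ admits the incorrect combination $(x,\alpha)=(a_0,0)$, whereas \eqref{eq:xIMmod} together with \eqref{con:Deltay_1Sridhar} forces $x=0$ whenever $\alpha=0$. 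Your attempt is at parity with the paper's own proof on the witness and more honest about where the inclusion step creaks; a clean fix would restate the proposition as a comparison of the $(x,\alpha)$-behaviour (or of the sets of incorrect extreme points) rather than as a set inclusion.
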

\begin{proof}
It is clear that any point $p\in \mathbf{P}_{IM}$ is also in $\mathbf{P}^\prime_{IM}$, but the opposite is not necessarily true. Consider the point $p_0=(x,\mathbf{y},\boldsymbol{\beta},\alpha)=(a_0,\mathbf{0},\mathbf{0},0)$. Obviously, $p_0\in\mathbf{P}^\prime_{IM}$ but $p_0\notin\mathbf{P}_{IM}$. 
\end{proof}
This shows that $\mathbf{P}_{IM}$ is tighter than $\mathbf{P}^\prime_{IM}$. Besides, the model described by $\mathbf{P}^\prime_{IM}$ has at least one incorrect extreme point, that is the point given in the proof above.
\begin{remark}
The lower bound $a_0\alpha\leq x$ is redundant when the equality constraint in \eqref{eq:xIMmod} is used due to the non-negativity of the variables $y_k$.
\end{remark}
In a nutshell, when the PWL function is defined over $x\in[a_0,a_K]$, the bounding constraints in \eqref{eq:BinIndBound} are not required. Actually, the upper boundary $x\leq a_K\alpha$ destroys the ideality property of the model, as proved in \cite{Sridhar}. For the incremental model we propose to replace the lower bound with the equality constraints in \eqref{eq:xIMmod} and \eqref{eq:ObjDeltaMod} when $a_0\neq0$ or $f(a_0)\neq0$. On the other hand, for the case when $a_0=f(a_0)=0$ the formulation by incremental model contains an incorrect extreme point. Fortunately, this case is unlikely to happen in applications, unless some shifting is performed. Thus, we recommend not to shift the PWL function when it is defined over some interval that does not contain the zero.   
%\section{CONCLUSION}
%\section{CONCLUSION}
\section{CONCLUSION}
The incremental method to model the PWL function is proved to give locally ideal formulations just like the SOS2 method. In this work, we modified the incremental method to suit special class of discontinuous functions that are either right-continuous at all breaking points or left-continuous. The rationale behind this modification is the fact that the incremental method requires fewer continuous and binary variables. We also showed by numerical results that the incremental method could reduce the computational time significantly, due to the reduction in the number of variables. The proposed modification does not destroy the desired local ideality property of the incremental method. 
\newline 
Then, we discussed the optimization problem over several univariate PWL functions defined on intervals, which do not contain the zero, with binary indicators. The common technique to solve such optimization problems is to include a boundary constraint like the one in \eqref{eq:BinIndBound}. It was proved in \cite{Sridhar} that the upper bound constraint may destroy the local ideality of the formulation in any model. In this work, we proposed a simple modification to tighten the formulation formed by the incremental method. 
\newline
Of course, this does not mean that the incremental method always gives the best formulation of the PWL functions in optimization problems because this depends on the nature of every particular problem. We merely want to say that the incremental method with the proposed modifications requires fewer variables, especially discontinuous functions. Thus, it could be more efficient than the convex combination or SOS2 methods.   

\end{document}